\newtheorem{thm}{Theorem}[section]
\newtheorem{lem}[thm]{Lemma}
\newtheorem{prop}[thm]{Proposition}
\newtheorem*{nlem}{Lemma}
\newcommand{\dpow}[2] {\frac{#1^#2}{#2!}}
\newcommand{\dpowp}[2] {\frac{#1^{#2}}{(#2)!}}
\newcommand{\h}{\mathbf{h}}
\renewcommand{\r}{\mathbf{r}}
\newcommand{\inv}{^{-1}}
\begin{document}
\title[Reciprocals and permutation enumeration]{Reciprocals of exponential polynomials and permutation enumeration}
\author{Ira M. Gessel$^*$}
\address{Department of Mathematics\\
   Brandeis University\\
   Waltham, MA 02453-2700}
\email{gessel@brandeis.edu}
\date{\today}
\thanks{$^*$Supported by a grant from the Simons Foundation (\#427060, Ira Gessel).}
\begin{abstract}
We show that the reciprocal of a partial sum of the alternating exponential series,
\begin{equation*}
\biggl(\sum_{n=0}^{2m-1} (-1)^n \dpow xn\biggr)\inv,
\end{equation*}
is the exponential generating function for permutations in which every increasing run has length congruent to 0 or 1 modulo $2m$. More generally we study polynomials whose reciprocals are exponential generating functions for permutations whose run lengths are restricted to certain congruence classes, and extend these results to noncommutative symmetric functions that count words with the same restrictions on run lengths.

\end{abstract}

\maketitle
\section{Introduction}
\thispagestyle{empty}
Since
\begin{equation}
\label{e-inv1}
\biggl(\sum_{n=0}^\infty (-1)^n \dpow xn\biggr)\inv
  =\sum_{n=0}^\infty \dpow xn,
\end{equation}
we might ask if anything interesting can be said about the coefficients of reciprocals of the partial sums of $\sum_n (-1)^n x^n/n!$. For partial sums with an odd number of terms there are negative coefficients, but for partial sums with an even number of terms  the reciprocals  have positive coefficients. We will give here a general result on permutation enumeration that as a special case
gives a combinatorial interpretation to the numbers $u_n$ defined by
\begin{equation*}
\biggl(\sum_{n=0}^{2m-1} (-1)^n \dpow xn\biggr)\inv = \sum_{n=0}^\infty u_n \dpow xn.
\end{equation*}
We will see that $u_n$ is the number of permutations of $[n]=\{1,2,\dots, n\}$ in which every increasing run has length congruent to 0 or 1 modulo $2m$. (The increasing runs of a permutation are its maximal increasing consecutive subsequences; thus, the increasing runs of the permutation $132576$ are $13$, $257$, and $6$, with lengths 2, 3 and~1.)

 For example, when $m=2$, we have
\[\sum_{n=0}^\infty u_n \dpow xn 
=\left( 1-x+\dpow x2  -\dpow x3\right)\inv,\]  and the first few values of $u_n$ are as follows:
\[\vbox{\halign{\ \hfil\strut$#$\hfil\ \vrule&&\hfil\ $#$\ \hfil\cr
n&0&1&2&3&4&5&6&7&8&9&10&11&12&13\cr
\noalign{\hrule}
u_n&1&1&1&1&2&10&50&210&840&4200&29400&231000&1755600&13213200\cr}}\]

More generally, we consider a class of polynomials whose reciprocals, as exponential generating functions, count permutations whose run lengths lie in certain congruence classes. Our exponential generating function results are derived from more general results on noncommutative symmetric functions that count words with the same restrictions on run lengths.

\section{Words and permutations with restricted run lengths}
\label{s-runs}
A simple class of polynomials with the property under consideration is given by the following result,  which we will prove later.
\begin{prop}
\label{p-egf}
Let $m$ and $r$ be  positive integers. Then
the coefficient of $x^n/n!$ in 
\begin{equation*}
\Biggl(\sum_{k=0}^{m-1} \left(\dpowp x{kr} - \dpowp x{kr+1}   \right)\Biggr)\inv
\end{equation*}
is the number of permutations of $[n]$ in which every increasing run has length congruent to $0, 1, \dots,$ or $r-1$ modulo $mr$.
\end{prop}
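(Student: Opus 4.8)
The plan is to prove the equivalent statement that $P(x)\inv$, where
\[
P(x):=\sum_{k=0}^{m-1}\Bigl(\dpowp x{kr}-\dpowp x{kr+1}\Bigr),
\]
is the exponential generating function $\sum_{n\ge0}c_n x^n/n!$ for the permutations in question; equivalently, that $P(x)=\bigl(\sum_{n\ge0}c_n x^n/n!\bigr)\inv$. Throughout, let $S$ denote the set of admissible run lengths, i.e. the positive integers $b$ with $b\bmod mr\in\{0,1,\dots,r-1\}$, so that $c_n$ counts the permutations of $[n]$ whose descent composition has all parts in $S$.

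First I would establish a ``transfer lemma'' expressing $\sum_n c_n x^n/n!$ as the reciprocal of an explicit polynomial. Recall that the number of permutations of $[n]$ whose descent set is contained in the set of partial sums of a composition $\delta=(\delta_1,\dots,\delta_p)\models n$ is the multinomial coefficient $\binom n\delta=n!/(\delta_1!\cdots\delta_p!)$, and that a permutation has descent composition exactly $\gamma$ precisely when its increasing runs have lengths $\gamma_1,\gamma_2,\dots$ in order. Since every interval in the refinement order on compositions of $n$ is a Boolean lattice, M\"obius inversion gives $\beta_\gamma=\sum_{\delta}(-1)^{\ell(\gamma)-\ell(\delta)}\binom n\delta$ for the number $\beta_\gamma$ of permutations with descent composition exactly $\gamma$, the sum being over all coarsenings $\delta$ of $\gamma$. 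Summing over $\gamma$ with all parts in $S$ and interchanging the two sums, one fixes a coarsening $\delta$ and sums over the compositions $\gamma$ with parts in $S$ that refine $\delta$; such $\gamma$ are obtained by refining each part $\delta_i$ independently, so the inner sum factors over the parts of $\delta$. This yields
\[
\sum_{n\ge0}c_n\frac{x^n}{n!}=\frac1{1-A(x)},\qquad
A(x):=\sum_{b\ge1}w(b)\frac{x^b}{b!},\quad
w(b):=\sum_{\substack{\alpha\models b\\ \text{all parts in }S}}(-1)^{\ell(\alpha)-1}.
\]

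It would then remain to verify the identity $1-A(x)=P(x)$, which is a purely formal computation. Summing a geometric series on the number of parts of $\alpha$,
\[
\sum_{b\ge1}w(b)\,y^b=\sum_{\ell\ge1}(-1)^{\ell-1}E(y)^\ell=\frac{E(y)}{1+E(y)},\qquad E(y):=\sum_{s\in S}y^s.
\]
Because $S$ is the union of the residue classes $0,1,\dots,r-1$ modulo $mr$ with the element $0$ removed, one has $1+E(y)=(1+y+\dots+y^{r-1})/(1-y^{mr})$, and using $(1-y^{mr})/(1-y^r)=1+y^r+\dots+y^{(m-1)r}$ one simplifies $E(y)/(1+E(y))$ to $1-(1-y)(1+y^r+\dots+y^{(m-1)r})$. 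Reading off coefficients gives $w(1)=1$, $w(kr)=-1$ and $w(kr+1)=1$ for $1\le k\le m-1$, and $w(b)=0$ otherwise (when $r=1$ the two families overlap and cancel except at $b=m$, consistently with $P(x)=1-x^m/m!$ there). Substituting these values into $A(x)$ and comparing with $P(x)=\bigl(1+\sum_{k=1}^{m-1}x^{kr}/(kr)!\bigr)-\bigl(x+\sum_{k=1}^{m-1}x^{kr+1}/(kr+1)!\bigr)$ gives $1-A(x)=P(x)$, which completes the argument.

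I expect the transfer lemma of the second paragraph to be the only real obstacle: one must keep straight the direction of refinement, the signs coming from the M\"obius function, and the combinatorial fact that for a fixed coarse composition the sum over its $S$-valued refinements factors over its parts. Once that identity is in hand, the rest is routine manipulation of rational functions. (Alternatively, this first step should follow from the noncommutative symmetric function results promised later in the paper, so that Proposition~\ref{p-egf} could instead be deduced by specialization, bypassing the ad hoc inclusion--exclusion.)
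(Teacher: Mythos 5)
Your argument is correct, and it reaches the same rational-function identity as the paper but by a more self-contained, purely commutative route. The paper deduces Proposition \ref{p-egf} from the noncommutative identity of Proposition \ref{p-ncsf}, which it obtains by citing the run theorem (through Proposition \ref{p-main}) and then carrying out exactly the computation you do at the end: the reciprocal of $(1-x)(1+x^r+\cdots+x^{(m-1)r})=\sum_{k=0}^{m-1}(x^{kr}-x^{kr+1})$ is $(1+x+\cdots+x^{r-1})/(1-x^{mr})$, whose coefficient support is precisely $S\cup\{0\}$. Your ``transfer lemma'' is in effect a from-scratch proof of the exponential generating function specialization of the run theorem in the case at hand: the M\"obius inversion over the Boolean lattice of compositions, the factorization of the inner sum over the parts of the coarse composition $\delta$, and the summation $\sum_{\ell\ge1}(-1)^{\ell-1}E(y)^\ell=E(y)/(1+E(y))$ together reproduce, at the level of $\Psi$-images, the relation between $\sum_n a_n x^n$ and $\sum_n w_n x^n$ that the paper takes as a black box. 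What your route buys is independence from any external result; what it gives up is the refined noncommutative statement $\bigl(\sum_{k}(\h_{kr}-\h_{kr+1})\bigr)\inv=\sum_L\r_L$, which counts words rather than just permutations and which the paper obtains for free along the way. Your closing parenthetical remark --- that the proposition could instead be deduced by specializing the noncommutative symmetric function results --- describes exactly the paper's actual proof.
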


If we take the limit as $m\to\infty$ in Proposition \ref{p-egf}, we recover the known result \cite[pp.~156--157]{db} that 
the coefficient of $x^n/n!$ in 
\[\left(1-x+\dpow xr - \dpowp x{r+1} + \dpowp x{2r} -\dpowp x{2r+1}+\cdots\right)\inv\]
 is the number of permutations of $[n]$ in which every increasing run has length at most $r-1$. 
 
Proposition \ref{p-egf} is a consequence of a more refined result on noncommutative symmetric functions. 
We define the \emph{complete noncommutative symmetric function} $\h_n$ by 
\[
\mathbf{h}_{n}=\sum_{i_{1}\leq\cdots\leq i_{n}}X_{i_{1}}X_{i_{2}}\cdots X_{i_{n}},
\]
where the $X_i$ are noncommuting variables. (The algebra of noncommutative symmetric functions is generated by the $\h_n$. Noncommutative symmetric functions are studied extensively in \cite{gkllrt} and its successors, though with different notation and a different, but equivalent, definition.)

A \emph{composition} is a finite (possibly empty) sequence of positive integers. A \emph{composition of $n$} is a composition with sum $n$.
For a composition $L=(L_1,L_2,\dots, L_k)$, we define the \emph{ribbon noncommutative symmetric function} $\r_L$ by
\[
\mathbf{r}_{L}=\sum_{i_1,i_2,\dots, i_n}X_{i_{1}}X_{i_{2}}\cdots X_{i_{n}}
\]
where the sum is over all $i_{1},\dots,i_{n}$  satisfying
\begin{equation*}
\underset{L_{1}}{\underbrace{i_{1}\leq\cdots\leq i_{L_{1}}}}>\underset{L_{2}}{\underbrace{i_{L_{1}+1}\leq\cdots\leq i_{L_{1}+L_{2}}}}>\cdots>\underset{L_{k}}{\underbrace{i_{L_{1}+\cdots+L_{k-1}+1}\leq\cdots\leq i_{n}}}.
\end{equation*}
In other words, $\r_L$ is the sum of all words in the $X_i$ whose weakly increasing run lengths are $L_1,L_2,\dots, L_k$.

The connection between noncommutative symmetric functions and exponential generating functions is given by a well-known homomorphism $\Psi$ from noncommutative symmetric functions to power series in $x$: The homomorphism is determined  by $\Psi(\h_n)=x^n/n!$, but a more enlightening description of $\Psi$ is that if 
$f$ is a noncommutative symmetric function then the coefficient of $x^n/n!$ in $\Psi(f)$ is the coefficient of $x_1x_2 \cdots x_n$ in the result of replacing $X_1,X_2,\dots$ in $f$ with commuting variables $x_1, x_2, \dots$.
It follows that for a composition $L=(L_1,\dots, L_k)$ of $n$, we have
$\Psi(\r_L)=\beta(L)x^n/n!$, where $\beta(L)$ is the number of permutations of $[n]$ whose sequence of increasing run lengths is $L$. (See, e.g., \cite[Section 4.2]{gz}.)

We will state our main results for sums of  ribbon noncommutative symmetric functions and the corresponding exponential generating functions.

Our main tool is the ``run theorem" as stated by Zhuang \cite[Theorem 1]{zhuang}, which allows us to count words with restrictions on the lengths of their increasing runs.
The result was first proved in an equivalent form by Gessel \cite[Theorem 5.2]{thesis}; a closely related result was proved by Jackson and Aleliunas \cite[Theorem 4.1]{ja}.

\begin{nlem}[The run theorem]
\label{t-runs}
Suppose that sequences $a_n$ and $w_n$ are related by 
\begin{equation}
\biggl(\sum_{n=0}^{\infty}a_{n}x^{n}\biggr)^{-1}=\sum_{n=0}^{\infty}w_{n}x^{n},
\label{e-cx}
\end{equation}
where $a_0=w_{0}=1$. Then 
\[
\biggl(\sum_{n=0}^{\infty}a_{n}\mathbf{h}_{n}\biggr)^{-1}=\sum_{L}w_{L}\mathbf{r}_{L},
\]
where the sum on the right is over all compositions $L$, and $w_{L}=w_{L_{1}}\cdots w_{L_{k}}$ for $L=\left(L_{1},\dots,L_{k}\right)$.
\end{nlem}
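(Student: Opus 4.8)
The plan is to carry out the entire argument inside the (graded, completed) algebra of noncommutative symmetric functions, where $A:=\sum_{n\ge0}a_n\h_n=1+a_1\h_1+\cdots$ is invertible precisely because $a_0=1$. Set $W:=\sum_L w_L\,\r_L$, the sum over \emph{all} compositions $L$, the empty composition included, with $w_\emptyset=1$ and $\r_\emptyset=\h_0=1$. It is enough to prove $WA=1$: the identity $AW=1$ follows from the mirror-image form of the same computation, and then $W=A\inv$, which is the assertion of the theorem.

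The one combinatorial ingredient I would establish first is the multiplication rule
\begin{equation*}
\r_{(L_1,\dots,L_k)}\,\h_n=\r_{(L_1,\dots,L_k,n)}+\r_{(L_1,\dots,L_{k-1},L_k+n)}\qquad(n\ge1),
\end{equation*}
with the understanding that the second term disappears when $k=0$ (so $\r_\emptyset\h_n=\h_n=\r_{(n)}$). This comes straight from the definitions: $\h_n=\r_{(n)}$ is the sum of all weakly increasing words of length $n$, so $\r_L\h_n$ is the sum of all concatenations $uv$, where $u$ runs over words whose weakly increasing run lengths are $(L_1,\dots,L_k)$ and $v$ over weakly increasing words of length $n$. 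The word $uv$ has run-length composition $(L_1,\dots,L_k,n)$ if the last letter of $u$ exceeds the first letter of $v$, and $(L_1,\dots,L_{k-1},L_k+n)$ otherwise; conversely a word with either of these two run-length compositions arises from exactly one such pair $(u,v)$, obtained by splitting off its final $n$ letters. (Alternatively one can cite the standard ribbon product $\r_I\r_J=\r_{IJ}+\r_{I\triangleright J}$ from the noncommutative symmetric function literature and specialize $J=(n)$.)

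With this in hand I would separate the $n=0$ term of $A$, which contributes $a_0 W=W$, to obtain
\begin{equation*}
WA=W+\sum_{L}\sum_{n\ge1}w_L\,a_n\,\r_L\h_n,
\end{equation*}
and then read off the coefficient of a fixed ribbon. For the empty composition the coefficient is $w_\emptyset=1$. For a nonempty composition $M=(M_1,\dots,M_j)$ there are three contributions: the term $w_M\r_M$ of $W$ gives $w_{M_1}\cdots w_{M_j}$; the first term of the multiplication rule gives $\r_M$ exactly when $L=(M_1,\dots,M_{j-1})$ and $n=M_j$, contributing $w_{M_1}\cdots w_{M_{j-1}}a_{M_j}$; and the second term gives $\r_M$ exactly when $L=(M_1,\dots,M_{j-1},M_j-n)$ with $1\le n\le M_j-1$, contributing $w_{M_1}\cdots w_{M_{j-1}}\sum_{n=1}^{M_j-1}w_{M_j-n}a_n$. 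Hence the coefficient of $\r_M$ in $WA$ is
\begin{equation*}
w_{M_1}\cdots w_{M_{j-1}}\Bigl(w_{M_j}+a_{M_j}+\sum_{n=1}^{M_j-1}w_{M_j-n}a_n\Bigr).
\end{equation*}
Now the coefficient of $x^N$ in \eqref{e-cx} says $\sum_{i=0}^N a_i w_{N-i}=0$ for $N\ge1$, that is, $w_N+a_N+\sum_{i=1}^{N-1}w_{N-i}a_i=0$ because $a_0=w_0=1$. Taking $N=M_j\ge1$ makes the parenthesized factor vanish, so every $\r_M$ with $M\ne\emptyset$ has coefficient $0$ in $WA$, leaving $WA=\r_\emptyset=1$.

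The only step with any real content is the combinatorial multiplication rule, and the place to be careful there is the edge cases — the empty composition $L$, and verifying that each target word is produced exactly once. Everything after that is bookkeeping: collect coefficients of ribbons and quote the scalar convolution identity coming from \eqref{e-cx}. The companion identity $AW=1$ would be handled in a sentence, since multiplying by $\h_n$ on the left satisfies $\h_n\,\r_{(M_1,\dots,M_j)}=\r_{(n,M_1,\dots,M_j)}+\r_{(n+M_1,M_2,\dots,M_j)}$ and the same cancellation occurs with $N=M_1$.
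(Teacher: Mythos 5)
Your proof is correct and complete. Note, however, that the paper does not actually prove this lemma: it is quoted from the literature (Zhuang [Theorem 1], originally Gessel's thesis [Theorem 5.2]), so there is no internal proof to compare against. Your argument --- establish the two-term product rule $\r_{(L_1,\dots,L_k)}\h_n=\r_{(L_1,\dots,L_k,n)}+\r_{(L_1,\dots,L_{k-1},L_k+n)}$ by splitting off the last $n$ letters of a word, then check that the coefficient of each nonempty ribbon in $WA$ is a convolution that vanishes by \eqref{e-cx} --- is a clean direct verification. The published proofs typically go the other way: expand $A\inv=\sum_{k\ge0}(-1)^k\bigl(\sum_{n\ge1}a_n\h_n\bigr)^k$, use the fact that $\h_{L_1}\cdots\h_{L_k}$ is the sum of $\r_M$ over all coarsenings $M$ of $L$, and identify the resulting coefficient of $\r_M$ as $\prod_i w_{M_i}$ by an inclusion--exclusion over refinements. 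Your route trades that expansion for a single Pieri-type multiplication rule and is, if anything, more elementary; the points needing care --- the empty-composition edge case, the uniqueness of the splitting, and the linear independence of the $\r_M$ (distinct compositions index disjoint sets of words, which justifies ``reading off coefficients'') --- are all handled or immediate. One small economy: since $a_0=1$ makes $A$ invertible in the completed graded algebra, the single identity $WA=1$ already forces $W=A\inv$, so the mirror computation for $AW=1$ is not strictly needed.
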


We are interested in the case in which only finitely many of the $a_n$ are nonzero and each $w_n$ is either 0 or 1. The next proposition results from applying the run theorem to this case, and then applying the homomorphism $\Psi$.

\begin{prop}
\label{p-main}
Let $a(x)=a_0+a_1 x +\cdots+a_d x^d$, where $a_0=1$, and suppose that every power series coefficient of 
$1/a(x)$ is $0$ or $1$. Let $S$ be the set of positive integers $s$ such that the coefficient of $x^s$ in $1/a(x)$ is $1$. Then 
\begin{equation*}
\biggl(\sum_{n=0}^d a_n \h_n\biggr)\inv=\sum_{L}\r_L,
\end{equation*}
where the sum on the right is over all compositions $L$ with every part in $S$, and the coefficient of $x^n/n!$ in 
\begin{equation*}
\biggl(\sum_{n=0}^d a_n \dpow xn\biggr)\inv
\end{equation*}
is the number of permutations of $[n]$ in which every increasing run length is in $S$.
\end{prop}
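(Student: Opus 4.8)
The plan is to derive Proposition~\ref{p-main} as an essentially immediate consequence of the run theorem together with the homomorphism $\Psi$. First I would set up the hypotheses in the form demanded by the run theorem: since $a_0=1$ by assumption, equation~\eqref{e-cx} applies with $a_n$ the coefficients of $a(x)$ (which vanish for $n>d$) and $w_n$ the power series coefficients of $1/a(x)$, and indeed $w_0=1$ because $a_0=1$. The run theorem then yields
\[
\biggl(\sum_{n=0}^{d}a_n\h_n\biggr)\inv=\sum_L w_L\,\r_L,
\]
the sum over all compositions $L$. Now I would invoke the hypothesis that every $w_n\in\{0,1\}$: for a composition $L=(L_1,\dots,L_k)$, the product $w_L=w_{L_1}\cdots w_{L_k}$ is $1$ if every part $L_i$ satisfies $w_{L_i}=1$, i.e.\ $L_i\in S$, and is $0$ otherwise. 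Hence the only surviving terms are those indexed by compositions all of whose parts lie in $S$, giving the first displayed identity of the proposition. (One should note the edge case: the empty composition contributes the constant term $1$ on both sides, consistent with $S\subseteq\mathbb{Z}_{>0}$, and $w_0=1$ is never a ``part.'')

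Next I would apply the homomorphism $\Psi$ to both sides. Since $\Psi$ is a ring homomorphism it commutes with taking reciprocals, so $\Psi$ of the left side is $\bigl(\sum_{n=0}^d a_n\Psi(\h_n)\bigr)\inv=\bigl(\sum_{n=0}^d a_n x^n/n!\bigr)\inv$, using $\Psi(\h_n)=x^n/n!$. Applying $\Psi$ termwise to the right side and using the stated fact that $\Psi(\r_L)=\beta(L)\,x^n/n!$ for a composition $L$ of $n$, where $\beta(L)$ counts permutations of $[n]$ with increasing-run-length sequence exactly $L$, I get that the coefficient of $x^n/n!$ in $\bigl(\sum_{n=0}^d a_n x^n/n!\bigr)\inv$ equals $\sum_L \beta(L)$, the sum over compositions $L$ of $n$ with all parts in $S$. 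Finally, since the increasing runs of a permutation of $[n]$ partition $[n]$ into consecutive blocks whose lengths form a composition of $n$, summing $\beta(L)$ over all compositions $L$ of $n$ with parts in $S$ counts exactly the permutations of $[n]$ all of whose increasing run lengths lie in $S$; this gives the second assertion.

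There is no serious obstacle here: the proposition is assembled from ingredients already in hand. The only point requiring a moment's care is the justification that $\Psi$ respects reciprocals and may be applied termwise to the infinite sum $\sum_L \r_L$ — this is fine because the grading ensures that only finitely many compositions $L$ contribute to each degree $n$, so the sum is summable and $\Psi$ acts degree by degree; I would state this briefly rather than belabor it. Equivalently, one could avoid mentioning summability by noting that $\Psi$ is continuous for the grading, or simply truncate at each degree. I would also remark that if one prefers not to pass through the run theorem's full strength, the exponential generating function statement alone follows from the original form in \cite[Theorem 5.2]{thesis}, but since we have already stated the run theorem it is cleanest to use it directly.
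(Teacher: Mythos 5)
Your proof is correct and follows exactly the route the paper intends: the paper disposes of this proposition in one sentence (``results from applying the run theorem to this case, and then applying the homomorphism $\Psi$''), and your write-up simply fills in those same steps, including the observation that $w_L\in\{0,1\}$ forces the surviving compositions to have all parts in $S$. No issues.
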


A simple application of Proposition \ref{p-main} is the following.
\begin{prop}
\label{p-ncsf}
Let $m$ and $r$ be  positive integers. Then
\begin{equation*}
\biggl(\sum_{k=0}^{m-1} \left(\h_{kr} - \h_{kr+1}\right)\biggr)\inv
  = \sum_{L}\r_L,
\end{equation*}
where the sum on the right is over compositions $L$ in which every part is congruent to $0, 1, \dots,$ or $r-1$ modulo $mr$.
\end{prop}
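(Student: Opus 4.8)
The plan is to obtain Proposition~\ref{p-ncsf} as a direct specialization of Proposition~\ref{p-main}, the only real work being an explicit computation of the relevant reciprocal as a power series. I would set
\[
a(x)=\sum_{k=0}^{m-1}\bigl(x^{kr}-x^{kr+1}\bigr),
\]
so that $a_0=1$ and $\sum_{n=0}^{d}a_n\h_n=\sum_{k=0}^{m-1}(\h_{kr}-\h_{kr+1})$, where $d=(m-1)r+1$. The first step is the factorization, immediate from the finite geometric series,
\[
a(x)=(1-x)\sum_{k=0}^{m-1}x^{kr}=(1-x)\,\frac{1-x^{mr}}{1-x^r}.
\]

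The second step is to invert this. From the factorization,
\[
\frac{1}{a(x)}=\frac{1-x^r}{(1-x)(1-x^{mr})}=\frac{1+x+\cdots+x^{r-1}}{1-x^{mr}}=\sum_{j\ge0}\sum_{i=0}^{r-1}x^{i+jmr},
\]
so the coefficient of $x^n$ in $1/a(x)$ is $1$ when the remainder of $n$ modulo $mr$ lies in $\{0,1,\dots,r-1\}$ and is $0$ otherwise. In particular every coefficient is $0$ or $1$, so the hypotheses of Proposition~\ref{p-main} hold, and the set $S$ of positive integers occurring there is exactly the set of positive integers congruent to $0,1,\dots,$ or $r-1$ modulo $mr$.

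Applying Proposition~\ref{p-main} to this $a(x)$ then gives $\bigl(\sum_{k=0}^{m-1}(\h_{kr}-\h_{kr+1})\bigr)\inv=\sum_L\r_L$ summed over all compositions $L$ with every part in $S$, which is the asserted identity. I do not expect any genuine obstacle: all the content is in the geometric-series manipulation above, and once $1/a(x)$ is identified the result is a routine instance of Proposition~\ref{p-main}. I would add the remark that applying the homomorphism $\Psi$ to this identity recovers Proposition~\ref{p-egf}, since $\Psi(\r_L)=\beta(L)x^n/n!$ counts permutations whose increasing-run-length sequence is $L$.
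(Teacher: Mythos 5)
Your proof is correct and follows essentially the same route as the paper: both identify $a(x)=(1-x)(1-x^{mr})/(1-x^r)$, compute $a(x)^{-1}=(1+x+\cdots+x^{r-1})/(1-x^{mr})$ to see that its coefficients are $0$ or $1$ with support exactly the residues $0,1,\dots,r-1$ modulo $mr$, and then invoke Proposition~\ref{p-main}. The only difference is cosmetic -- the paper starts from $a(x)^{-1}$ and expands $a(x)$, while you start from $a(x)$ and invert -- so there is nothing further to add.
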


\begin{proof}
In Proposition \ref{p-main} we take

\begin{equation*}
a(x)\inv= 
\frac{1 + x+x^2+\cdots +x^{r-1}}{1-x^{mr}}\\
  =\frac{1-x^r}{(1-x)(1-x^{mr})}
\end{equation*}
Then
\begin{align*}
a(x)&= (1-x)\frac{1-x^{mr}}{1-x^r}
    =(1-x)(1+x^r + x^{2r}+\cdots + x^{(m-1)r})\\
    &=\sum_{k=0}^{m-1} \left(x^{kr} - x^{kr+1}   \right).
    \qedhere
\end{align*}
\end{proof}

The limiting case $m\to \infty$ of Proposition \ref{p-ncsf} was given by Gessel \cite[Example 3, pp.~51--52]{thesis}. Results closely related  to this limiting case were 
proved by Jackson and Aleliunas \cite[Corollary 7.2]{ja} and by Yang and Zeilberger \cite{yz}.

Proposition \ref{p-egf} follows directly from Proposition \ref{p-ncsf} by applying the homomorphism $\Psi$.
By the same reasoning, we have a slight generalization. 

\begin{prop}
\label{p-gen}
Let $b$ and $r$ be  positive integers. Then 
\begin{equation*}
\biggl(\sum_{k=0}^{m-1} \left(\h_{krb} - \h_{(kr+1)b}\right)\biggr)\inv
  = \sum_{L}\r_L,
\end{equation*}
where the sum is over compositions $L$ in which every part is congruent to $0, b, \dots,$ or $(r-1)b$ modulo $mrb$.\qed
\end{prop}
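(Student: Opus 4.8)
The plan is to imitate the proof of Proposition~\ref{p-ncsf} with $x$ replaced throughout by $x^{b}$; since the paper introduces Proposition~\ref{p-gen} with the words ``by the same reasoning,'' this is precisely the route to take. (I take $m$ to be a positive integer as well, as in the surrounding discussion.) Concretely, in Proposition~\ref{p-main} I would apply the result with
\[
a(x)=\sum_{k=0}^{m-1}\left(x^{krb}-x^{(kr+1)b}\right),
\]
which is the polynomial obtained from the $a(x)$ of Proposition~\ref{p-ncsf} by the substitution $x\mapsto x^{b}$. It has constant term $1$ (from the $k=0$ summand $x^{0}-x^{b}$), so the hypothesis $a_{0}=1$ holds, and the associated noncommutative symmetric function $\sum_{n}a_{n}\h_{n}$ is exactly $\sum_{k=0}^{m-1}(\h_{krb}-\h_{(kr+1)b})$, the left-hand side of the proposition.

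Next I would verify the hypothesis of Proposition~\ref{p-main} that every power series coefficient of $1/a(x)$ is $0$ or $1$, and identify the set $S$. Factoring,
\[
a(x)=(1-x^{b})\left(1+x^{rb}+\cdots+x^{(m-1)rb}\right)=(1-x^{b})\,\frac{1-x^{mrb}}{1-x^{rb}},
\]
so that
\[
\frac{1}{a(x)}=\frac{1-x^{rb}}{(1-x^{b})(1-x^{mrb})}=\left(\sum_{j=0}^{r-1}x^{jb}\right)\sum_{i\ge 0}x^{imrb}.
\]
Since the exponents $jb$ for $0\le j\le r-1$ are distinct, nonnegative, and all strictly below $mrb$, the two factors never produce overlapping powers of $x$; hence every coefficient of $1/a(x)$ is $0$ or $1$, and the coefficient of $x^{s}$ is $1$ exactly when $s\equiv jb\pmod{mrb}$ for some $j\in\{0,1,\dots,r-1\}$. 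Thus $S$ is the set of positive integers congruent to $0,b,\dots,$ or $(r-1)b$ modulo $mrb$.

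Finally, Proposition~\ref{p-main} immediately yields
\[
\biggl(\sum_{k=0}^{m-1}(\h_{krb}-\h_{(kr+1)b})\biggr)\inv=\sum_{L}\r_{L},
\]
the sum over all compositions $L$ whose parts all lie in $S$, which is the assertion. I do not expect any genuine obstacle: the one substantive point is the $0/1$ property of $1/a(x)$, and that is the same bookkeeping already carried out in Proposition~\ref{p-ncsf}, now with $x$ dilated to $x^{b}$. Equivalently, one can argue abstractly that the substitution $x\mapsto x^{b}$ sends a power series with $0/1$ coefficients to another such series and carries the index set of Proposition~\ref{p-ncsf} to its $b$-fold dilate, so no computation beyond Proposition~\ref{p-ncsf} is strictly needed.
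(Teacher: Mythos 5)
Your proposal is correct and follows exactly the route the paper intends: the paper omits the proof with the remark ``by the same reasoning,'' meaning precisely the substitution $x\mapsto x^{b}$ in the argument of Proposition~\ref{p-ncsf} followed by an application of Proposition~\ref{p-main}, which is what you carry out. Your verification that $1/a(x)=(1+x^{b}+\cdots+x^{(r-1)b})/(1-x^{mrb})$ has $0/1$ coefficients, with $S$ the positive integers congruent to $0,b,\dots,(r-1)b$ modulo $mrb$, correctly fills in the details the paper leaves to the reader.
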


\section{Necessary and sufficient conditions}

The results of Section \ref{s-runs} suggest the following problem: Find all \emph{finite} sequences $a_1, a_2,\dots, a_n$ such that in the ribbon expansion
\begin{equation}
\label{e-ribbon}
(1+a_1 \h_1 +\cdots +a_n\h_n)^{-1}=\sum_L w_L \r_L,
\end{equation}
each coefficient $w_L$ is 0 or 1. 
By the run theorem, 
this is equivalent to finding all   polynomials $a(x)$ (with constant term 1) such that every coefficient of $a(x)^{-1}$ is 0 or~1.

First we  give a necessary condition.
\begin{lem}
\label{l-bounded}
Let $R(x)$ be a rational function with bounded integer power series coefficients. 
Then $R(x) = N(x)/(1-x^m)$ for some polynomial $N(x)$ and some positive integer $m$.
\end{lem}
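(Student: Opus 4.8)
The plan is to prove that the power series coefficients of $R(x)$ form an eventually periodic sequence, and then to observe that the generating function of any eventually periodic sequence is a polynomial divided by $1-x^m$, where $m$ is the eventual period.

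First I would write $R(x)=P(x)/Q(x)$ with $P,Q$ polynomials; since $R(x)$ has a power series expansion at the origin we may take $Q(0)\ne 0$, say $Q(x)=q_0+q_1x+\cdots+q_kx^k$ with $q_0\ne 0$. Writing $R(x)=\sum_{n\ge 0}a_nx^n$ and comparing coefficients in $Q(x)R(x)=P(x)$, we get $q_0a_n+q_1a_{n-1}+\cdots+q_ka_{n-k}=0$ for every $n>N_0:=\max(\deg P,k)$. Because $q_0\ne 0$, this is a genuine \emph{forward} linear recurrence: the window $v_n:=(a_n,a_{n+1},\dots,a_{n+k-1})$ determines $v_{n+1}$, and hence all later windows, as soon as $n>N_0$.

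The crucial step is a pigeonhole argument. By hypothesis there is an $M$ with $|a_n|\le M$ for all $n$, and the $a_n$ are integers, so every window $v_n$ lies in the finite set $\{-M,\dots,M\}^k$. Hence there are indices $N_0<j<j'$ with $v_j=v_{j'}$, and since a window determines all the windows after it, $a_n=a_{n+m}$ for all $n\ge j$, where $m:=j'-j\ge 1$; that is, $(a_n)$ is eventually periodic with period $m$. To finish, I would multiply by $1-x^m$: with the convention $a_i=0$ for $i<0$, we have $(1-x^m)R(x)=\sum_{n\ge 0}(a_n-a_{n-m})x^n$, and every coefficient with $n\ge j+m$ vanishes by periodicity, so $(1-x^m)R(x)=N(x)$ is a polynomial and $R(x)=N(x)/(1-x^m)$, as claimed.

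The only point that needs care is the reduction to a forward recurrence on which the pigeonhole step rests: one must use $Q(0)\ne 0$ to run the recurrence in the direction of increasing $n$, and one must only count windows $v_n$ with $n>N_0$, where the recurrence is actually in force. (A more algebraic route expands $R(x)$ in partial fractions over $\mathbb{C}$ and argues that boundedness, together with the integrality of the $a_n$, forces every pole on the unit circle to be simple and a root of unity, so those poles share the common denominator $1-x^m$; but the elementary pigeonhole argument above is cleaner and uses no facts from algebraic number theory.)
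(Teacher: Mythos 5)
Your proof is correct and follows essentially the same route as the paper's: the coefficients satisfy a linear recurrence with constant coefficients, the bounded-integer hypothesis makes the state space of windows finite, pigeonhole gives eventual periodicity, and multiplying by $1-x^m$ finishes. You have simply filled in the details (the role of $Q(0)\ne 0$ in making the recurrence run forward, and the window argument) that the paper leaves implicit.
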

\begin{proof}
Let $R(x)=\sum_{n=0}^\infty r_n x^n$. Then the numbers $r_n$ satisfy a linear homogeneous recurrence with constant coefficients, so by the pigeonhole principle and the fact that the set $\{r_0, r_1, r_2,\dots\}$ is finite, the sequence $r_0, r_1,\dots$ 
must be eventually periodic, and the result follows.
\end{proof}

We can now give a necessary and sufficient condition. Let $\Phi_d(x)$ denote the $d$th cyclotomic polynomial. 
We will need three well-known properties of  cyclotomic polynomials:
\begin{enumerate}
\item[(i)] The polynomials $\Phi_d(x)$ are irreducible.
\item[(ii)] We have the factorization $x^m-1 = \prod_{d\mid m}\Phi_d(x).$
\item[(iii)] Every cyclotomic polynomial has leading coefficient 1.
\end{enumerate}

\begin{prop}
\label{p-necsuf}
Let $a(x)$ be a nonconstant polynomial. Then every power series coefficient of $a(x)^{-1}$ is 0 or 1 if and only  if for some integer $m$ we have
\begin{equation*}
a(x) = \frac{1-x^m}{N(x)}
\end{equation*}
where $N(x)$ is a product of distinct cyclotomic polynomials, $N(x)=\Phi_{d_1}(x)\cdots \Phi_{d_s}(x)$,  each $d_i$ divides $m$,  all coefficients of $N(x)$ are 0 or 1, and the degree of $N(x)$ is less than $m$.
\end{prop}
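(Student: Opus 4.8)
I would prove the two implications separately. For the ``if'' direction, suppose $a(x)=(1-x^m)/N(x)$ with $N(x)=\Phi_{d_1}(x)\cdots\Phi_{d_s}(x)$ as in the statement. Since the $d_i$ are distinct divisors of $m$, facts (i) and (ii) show that the $\Phi_{d_i}(x)$ are distinct irreducible factors of the squarefree polynomial $x^m-1$, so $N(x)\mid 1-x^m$ and $a(x)$ is a genuine polynomial, nonconstant because $\deg N<m$ gives $\deg a=m-\deg N\ge1$. Then
\[
a(x)^{-1}=\frac{N(x)}{1-x^m}=\sum_{j\ge0}N(x)\,x^{jm},
\]
and since $\deg N<m$ the supports of the summands are pairwise disjoint, so the power series coefficients of $a(x)^{-1}$ are exactly the coefficients of $N(x)$ repeated with period $m$ --- all $0$ or $1$.

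For the ``only if'' direction, suppose $a(x)$ is nonconstant and $a(x)^{-1}=\sum_{n\ge0}r_nx^n$ with each $r_n\in\{0,1\}$. The $r_n$ are bounded integers, so Lemma~\ref{l-bounded} gives $a(x)^{-1}=N_0(x)/(1-x^m)$ for some polynomial $N_0(x)$ and positive integer $m$; clearing denominators, $a(x)\mid 1-x^m$. By (ii), $1-x^m$ is squarefree with irreducible factors exactly the $\Phi_d(x)$, $d\mid m$, so by (i) we may write $a(x)=c\prod_{d\in D}\Phi_d(x)$ where $c$ is the leading coefficient of $a$ and $D$ is a set of divisors of $m$. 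I then set $N(x):=(1-x^m)/a(x)=-c^{-1}\prod_{d\mid m,\,d\notin D}\Phi_d(x)$, a polynomial of degree $m-\deg a$; this degree is nonnegative since $a\mid1-x^m$ forces $\deg a\le m$, and it is less than $m$ since $a$ is nonconstant. From $a(x)^{-1}=N(x)/(1-x^m)$ with $\deg N<m$ we get $(1-x^m)\sum_nr_nx^n=N(x)$, and comparing coefficients of $x^n$ for $n\ge m$ gives $r_n=r_{n-m}$; hence $(r_n)_{n\ge0}$ is \emph{purely} periodic with period $m$ and $N(x)=\sum_{n=0}^{m-1}r_nx^n$. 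So every coefficient of $N$ lies in $\{0,1\}$, and in particular its leading coefficient is $1$; since by (iii) that leading coefficient equals $-c^{-1}$, we get $c=-1$. Therefore $N(x)=\prod_{d\mid m,\,d\notin D}\Phi_d(x)$ is a product of distinct cyclotomic polynomials, each dividing $m$, with all coefficients $0$ or $1$ and degree less than $m$, and $a(x)=(1-x^m)/N(x)$, as required.

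The step I expect to need the most care is the passage from Lemma~\ref{l-bounded}, which yields only \emph{eventual} periodicity of $(r_n)$, to the \emph{pure} periodicity used above. The device is to choose the denominator $1-x^m$ so that the numerator $N(x)=(1-x^m)/a(x)$ automatically has degree less than $m$; pure periodicity then follows, and with it the crucial fact that the coefficients of $N$ are precisely $r_0,\dots,r_{m-1}\in\{0,1\}$. The other ingredients --- the factorization $x^m-1=\prod_{d\mid m}\Phi_d(x)$ and the short leading-coefficient computation pinning down $c=-1$ --- are routine.
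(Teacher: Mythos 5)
Your proof is correct and follows essentially the same route as the paper's: Lemma \ref{l-bounded} combined with the cyclotomic factorization of $1-x^m$, the degree bound $\deg N<m$ forcing the coefficients of $N$ to be among the $r_n\in\{0,1\}$, and a leading-coefficient argument via property (iii) to fix the sign. You are somewhat more explicit about the passage from eventual to pure periodicity and you also supply the ``if'' direction, which the paper declares easy and omits, but the underlying argument is the same.
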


\begin{proof}
Suppose that $a(x)$ is a nonconstant polynomial such that every power series coefficient of $a(x)^{-1}$ is 0 or 1. Then by Lemma \ref{l-bounded}, $a(x)=(1-x^m)/N(x)$ for some polynomial $N(x)$. 

Since $a(x)$ is a polynomial, by (i) and (ii), $N(x)$ must be a product of cyclotomic polynomials $\pm \Phi_{d_1}(x)\cdots \Phi_{d_s}(x)$ for distinct divisors $d_1,\dots, d_s$ of $m$.

Since $a(x)$ is not a constant, $N(x)$ must have degree less than $m$.
Then
\begin{equation*}
a(x)^{-1}=\frac{N(x)}{1-x^m}=N(x) +x^m N(x) +x^{2m}N(x)+\cdots,
\end{equation*}
so since the degree of $N(x)$ is less than $m$, the coefficients of $N(x)$ are all 0 or 1. 
Finally,  since $N(x)$ has all nonnegative coefficients, by (iii) we must have the plus sign in $N(x)=\pm \Phi_{d_1}(x)\cdots \Phi_{d_s}(x)$.

The proof of the converse is easy, and is omitted.
\end{proof}

Proposition \ref{p-necsuf} is not completely satisfactory, since it does not give a simple criterion for determining when a product of cyclotomic polynomials has all coefficients 0 and 1, and there is probably  no simple criterion.

The simplest examples not included in Proposition \ref{p-gen} are 
\begin{equation*}
a(x) =1-x^2-x^3+x^4+x^5-x^7=(1-x)(1+x+x^2)(1-x^2+x^4),
\end{equation*}
for which 
\begin{equation*}
a(x)^{-1} = \frac{(1+x^2)(1+x^3)}{1-x^{12}}=\frac{1+x^2+x^3+x^5}{1-x^{12}}
\end{equation*}
and
\begin{equation*}
a(x) = 1-x^2-x^3+x^5+x^6-x^8=(1-x)(1+x)(1-x^3+x^6)
\end{equation*}
for which 
\begin{equation*}
a(x)^{-1}= \frac{(1+x^2+x^4)(1+x^3+x^6)}{1-x^{18}}
  =\frac{1+x^2+x^3+x^4+x^5+x^6+x^7+x^8+x^{10}}{1-x^{18}}.
\end{equation*}

Many examples of allowable $N(x)$, including all of our examples so far, can be obtained from products of the polynomials
\begin{equation*}
1+x^k + x^{2k}+\cdots +x^{(r-1)k}=\frac{1-x^{rk}}{1-x^k}
  =\prod_{\substack{d\mid rk\\d\nmid k}}\Phi_d(x).
\end{equation*}
Any such product will have positive coefficients, but there does not seem to be a simple criterion for determining when they are all 0 or 1.

Not all products of cyclotomic polynomials with all coefficients  0 or 1 are of this form. For example, we have
\begin{equation*}
\Phi_5(x)\Phi_6(x) = (1+x+x^2+x^3+x^4)(1-x+x^2) = 1+{x}^{2}+{x}^{3}+{x}^{4}+{x}^{6},\end{equation*}
which gives the polynomial
\begin{align*}
a(x) &= \frac{1-x^{30}}{1+{x}^{2}+{x}^{3}+{x}^{4}+{x}^{6}}\\
&= 1-{x}^{2}-{x}^{3}+2\,{x}^{5}+{x}^{6}-{x}^{7}-2\,{x}^{8}-{x}^{9}+2\,{x}
^{10}\\
&\qquad+2\,{x}^{11}-2\,{x}^{13}-2\,{x}^{14}+{x}^{15}+2\,{x}^{16}+{x}^{17
}-{x}^{18}-2\,{x}^{19}+{x}^{21}+{x}^{22}-{x}^{24}
\end{align*}
and
\begin{equation*}
a(x)\inv = \frac{1+{x}^{2}+{x}^{3}+{x}^{4}+{x}^{6}}{1-x^{30}}.
\end{equation*}

Combining Proposition  \ref{p-main} with Proposition \ref{p-necsuf} gives a more explicit version of Proposition  \ref{p-main}.

\begin{thm}
\label{t-main2}
Let $a(x)=a_0+a_1 x +\cdots+a_d x^d$, where $a_0=1$, and suppose that every power series coefficient of 
$1/a(x)$ is $0$ or $1$. Then there exists a positive integer $m$ and 
a finite set $T\subseteq \{0,1,\dots, m-1\}$, with $0\in T$,  such that
\begin{equation*}
a(x)^{-1}=\sum_{t\in T^*} x^t,
\end{equation*}
where $T^*$ is the set of nonnegative integers congruent modulo $m$ to an element of $T$.

Moreover
\begin{equation*}
\biggl(\sum_{n=0}^d a_n \h_n\biggr)\inv=\sum_{L}\r_L,
\end{equation*}
where the sum on the right is over all compositions $L$ with every part in $T^*$, and the coefficient of $x^n/n!$ in 
\begin{equation*}
\biggl(\sum_{n=0}^d a_n \dpow xn\biggr)\inv
\end{equation*}
is the number of permutations of $[n]$ in which every increasing run length is in $T^*$.
\end{thm}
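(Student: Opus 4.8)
The plan is to derive Theorem~\ref{t-main2} by simply chaining together the three ingredients already assembled: Proposition~\ref{p-necsuf} to pin down the shape of $a(x)^{-1}$, a small combinatorial observation to read off the exponent set as a union of residue classes, and Proposition~\ref{p-main} to convert this into the statements about ribbon noncommutative symmetric functions and permutation run lengths.

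First I would invoke Proposition~\ref{p-necsuf}. Since $a(x)$ is nonconstant with every power series coefficient of $a(x)^{-1}$ equal to $0$ or $1$, there is a positive integer $m$ and a polynomial $N(x)=\Phi_{d_1}(x)\cdots\Phi_{d_s}(x)$, a product of distinct cyclotomic polynomials each of whose indices divides $m$, with all coefficients of $N(x)$ in $\{0,1\}$ and $\deg N(x)<m$, such that $a(x)=(1-x^m)/N(x)$. Hence
\begin{equation*}
a(x)^{-1}=\frac{N(x)}{1-x^m}=N(x)+x^m N(x)+x^{2m}N(x)+\cdots.
\end{equation*}
Writing $N(x)=\sum_{t\in T} x^t$ with $T=\{\,t: \text{coefficient of }x^t\text{ in }N(x)\text{ is }1\,\}$, the condition $\deg N(x)<m$ gives $T\subseteq\{0,1,\dots,m-1\}$, and since $a(x)$ has constant term $1$ the constant term of $a(x)^{-1}$ is $1$, so $0\in T$. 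Because the blocks $x^{jm}N(x)$ occupy disjoint ranges of exponents $[jm,\,jm+m-1]$, no cancellation or overlap occurs, and the coefficient of $x^\ell$ in $a(x)^{-1}$ is $1$ precisely when the residue of $\ell$ modulo $m$ lies in $T$. Thus $a(x)^{-1}=\sum_{t\in T^*}x^t$ with $T^*$ as defined in the statement, which is the first assertion.

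Next I would feed this back into Proposition~\ref{p-main}. By that proposition, with $S$ the set of positive integers whose coefficient in $1/a(x)$ is $1$, we have $(\sum_{n=0}^d a_n\h_n)^{-1}=\sum_L \r_L$ over compositions $L$ with every part in $S$, and the coefficient of $x^n/n!$ in $(\sum_{n=0}^d a_n x^n/n!)^{-1}$ counts permutations of $[n]$ whose every increasing run length lies in $S$. It only remains to observe that $S = T^*\cap\{1,2,\dots\} = T^*\setminus\{0\}$, since the parts of a composition are positive integers and $0\in T^*$ contributes nothing to either sum (a composition has no parts equal to $0$, and a run length is never $0$). Replacing $S$ by $T^*$ in the two conclusions of Proposition~\ref{p-main} therefore changes nothing and yields the remaining assertions of the theorem.

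Honestly, there is no real obstacle here: every nontrivial step has been done in the earlier results, and the only thing to be careful about is the bookkeeping between the exponent set $T^*$ (which contains $0$) and the set $S$ of \emph{positive} exponents that actually governs compositions and run lengths — these differ only in whether $0$ is included, and $0$ is irrelevant to both the ribbon sum and the permutation count. So the "proof" is essentially a two-line citation of Propositions~\ref{p-necsuf} and \ref{p-main} together with the disjoint-blocks remark that turns $N(x)/(1-x^m)$ into $\sum_{t\in T^*}x^t$.
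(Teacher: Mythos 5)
Your proposal is correct and follows exactly the route the paper intends: the paper proves Theorem~\ref{t-main2} only by the remark that it results from combining Propositions~\ref{p-main} and~\ref{p-necsuf}, and your write-up fills in precisely those details (the disjoint-blocks expansion of $N(x)/(1-x^m)$ giving $T^*$, and the observation that $S=T^*\setminus\{0\}$ is immaterial for compositions and run lengths). Nothing further is needed.
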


\bigskip
\noindent\textbf{Acknowledgment.} I would like to thank Brendon Rhoades for discussions that ultimately led to the results of this paper, and Yan Zhuang and two anonymous referees for helpful comments on an earlier version.

\end{document}